\documentclass[10pt]{article}
\textwidth= 5.00in
\textheight= 7.4in
\topmargin = 30pt
\evensidemargin=0pt
\oddsidemargin=55pt
\headsep=17pt
\parskip=.5pt
\parindent=12pt
\font\smallit=cmti10
\font\smalltt=cmtt10

\usepackage{amssymb,latexsym,amsmath,epsfig,amsthm,mathrsfs} %% Add other packages as necessary

\numberwithin{equation}{section}
\theoremstyle{plain}

\makeatletter

\renewcommand\section{\@startsection {section}{1}{\z@}
{-30pt \@plus -1ex \@minus -.2ex}
{2.3ex \@plus.2ex}
{\normalfont\normalsize\bfseries}}

\renewcommand\subsection{\@startsection{subsection}{2}{\z@}
{-3.25ex\@plus -1ex \@minus -.2ex}
{1.5ex \@plus .2ex}
{\normalfont\normalsize\bfseries}}

\renewcommand{\@seccntformat}[1]{\csname the#1\endcsname. }

\newtheorem{theorem}{Theorem}
\newtheorem{lemma}[theorem]{Lemma}

\newtheorem*{conjecture}{\bf Conjecture}

\newcommand{\Z}{\mathbb{Z}}
\newcommand{\eps}{\epsilon}

\newcommand{\CE}{\mathcal{CE}}
\newcommand{\CO}{\mathcal{CO}}
\newcommand{\spmod}{\hspace{-8pt}\pmod}

\DeclareMathOperator{\ce}{ce}
\DeclareMathOperator{\co}{co}

\theoremstyle{remark}

\newtheorem*{remark}{Remark}

\numberwithin{theorem}{section} \numberwithin{equation}{section}

\makeatother

\begin{document}
\begin{center}
\uppercase{\bf Congruences of Concave Composition Functions}
\vskip 20pt
{\bf Keenan Monks}\\
{\smallit Harvard University, Cambridge, MA 02138, USA}\\
{\tt monks@college.harvard.edu}\\
\vskip 10pt
{\bf Lynnelle Ye}\\
{\smallit Stanford University, Stanford, CA 94309, USA}\\
{\tt lynnelle@stanford.edu}\\ 
\end{center}
\vskip 30pt
\centerline{\smallit Received: , Revised: , Accepted: , Published: } % We will fill in the dates
\vskip 30pt

\centerline{\bf Abstract}
\noindent
Concave compositions are ordered partitions whose parts are decreasing towards a central part. We study the distribution modulo $a$ of the number of concave compositions. Let $c(n)$ be the number of concave compositions of $n$ having even length. It is easy to see that $c(n)$ is even for all $n\geq1$. Refining this fact, we prove that $$\#\{n<X:c(n)\equiv 0\pmod 4\}\gg\sqrt{X}$$ and also that for every $a>2$ and at least two distinct values of $r\in\{0,1,\dotsc,a-1\}$, 
$$\#\{n<X: c(n)\equiv r\spmod{a}\} > \frac{\log_2\log_3 X}{a}.$$ We obtain similar results for concave compositions of odd length.

\pagestyle{myheadings} 
\markright{\smalltt INTEGERS: 13 (2013)\hfill} 
\thispagestyle{empty} 
\baselineskip=12.875pt 
\vskip 30pt

\section{Introduction and statement of results}

In their 1967 paper, Parkin and Shanks \cite{parkin} conjectured that the partition function $p(n)$ takes on even and odd values  with equal likelihood. Very little is known about the distribution of the parity of $p(n)$. Recently, Ahlgren \cite{ahlgren} (see also the work of Berndt, Yee, and Zaharescu \cite{Berndt} as well as the works referenced therein) proved that the number of integers with an even number of partitions less than $X$ is on the order of $\sqrt{X}$. This improved on work of Mirsky \cite{mirsky} showing that about $\log\log X$ numbers less than $X$ have partition values in some nonzero residue class modulo any integer $a$, for the special case $a=2$. 

A combinatorial object similar to the set of partitions is the set of concave compositions --- ordered partitions whose summands decrease towards a center summand. We break up these compositions into the following three types, as defined by Andrews in~\cite{andrews}. 

\emph{Concave compositions of even length} are ordered partitions of the form $a_1+a_2+\cdots+a_m+b_1+b_2+\cdots+b_m$ where $$a_1>a_2>\cdots>a_m=b_m<b_{m-1}<\cdots<b_1$$ and $a_m\geq 0$. We denote the number of concave compositions of even length of an integer $n$ by $\ce(n)$.

\emph{Concave compositions of odd length of type $1$} are ordered partitions of the form $a_1+a_2+\cdots+a_{m+1}+b_1+b_2+\cdots+b_m$ where $$a_1>a_2>\cdots>a_{m+1}<b_m<b_{m-1}<\cdots<b_1$$ and $a_{m+1}\geq 0$. We denote the number of concave compositions of odd length of type $1$ of $n$ by $\co_1(n)$.

Finally, \emph{concave compositions of odd length of type $2$} are ordered partitions of the form $a_1+a_2+\cdots+a_{m+1}+b_1+b_2+\cdots+b_m$ where $$a_1>a_2>\cdots>a_{m+1}\leq b_m<b_{m-1}<\cdots<b_1$$ and $a_{m+1}\geq 0$. We denote the number of concave compositions of odd length of type $2$ of $n$ by $\co_2(n)$. Note that all concave compositions of odd length of type $1$ are also of type $2$.

It is natural to consider the distribution of these functions modulo $a$. To this end, we define $$E_f(r,a;X)=\#\{n<X:f(n)\equiv r\spmod a\}.$$ Since the function $\co_1(n)$ is odd exactly when $n$ is a triangular number (see the remark following Lemma~\ref{expansions}), we define the function $\co_1'(n)$ by subtracting $1$ from $\co_1(n)$ if $n$ is triangular and keeping it the same otherwise. Then we have the following theorem.

\begin{theorem}\label{main}\ \\
The following are true.\\
(i) There exists an explicit constant $c>0$ such that for sufficiently large $X$ we have
$$E_{\ce}(0,4;X)>c\sqrt{X}.$$
(ii) There exists an explicit constant $c>0$ and $0<\alpha<1$ such that for sufficiently large $X$ we have
$$E_{\co_1'}(0,4;X)>\frac{X\log^{\alpha}X-cX}{(\sqrt{24X+1}+1)\log^{\alpha}X}.$$ %where $\co_1'\left(\frac{n^2-n}{2}\right)=\co_1\left(\frac{n^2-n}{2}\right)-1$ and $\co_1'(n)=\co_1(n)$ for $n$ not triangular.\\
(iii) There exists an explicit constant $c>0$ such that for sufficiently large $X$ we have
$$E_{\co_2}(0,2;X)>c\sqrt{X}.$$
\end{theorem}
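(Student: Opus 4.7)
The plan is to follow the strategy of Ahlgren~\cite{ahlgren} for the parity of $p(n)$: express each generating function as an eta quotient, reduce it modulo the relevant power of $2$, and extract the desired congruence from a resulting theta-type identity. The essential input is Lemma~\ref{expansions}, which supplies product formulas for $\sum\ce(n)q^n$, $\sum\co_1(n)q^n$, and $\sum\co_2(n)q^n$; its accompanying remark already establishes that $\co_1(n)\pmod 2$ is the characteristic function of the triangular numbers, which is why $\co_1'$ is the natural object in part (ii).

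For part (iii) I would apply the Freshman's dream $(q;q)_\infty^2\equiv(q^2;q^2)_\infty\pmod 2$ repeatedly to the eta-quotient expansion of $\sum\co_2(n)q^n$, reducing it modulo $2$ to a theta series of the form $\sum_k q^{g(k)}$ for some quadratic $g$. This implies that $\co_2(n)\pmod 2$ equals the parity of the representation count of $n$ by $g$; since the image $\{g(k):g(k)\le X\}$ has cardinality $O(\sqrt X)$, its complement---on which $\co_2(n)\equiv 0\pmod 2$ automatically---contains at least $X-O(\sqrt X)\gg\sqrt X$ integers, which is more than enough. Parts (i) and (ii) run in parallel but modulo $4$; here I would derive a refinement $(q;q)_\infty^2\equiv(q^2;q^2)_\infty+2R(q)\pmod 4$ for an explicit theta-like correction $R(q)$, substitute into the eta quotients, and rewrite the result as $(\text{theta series})+2\cdot(\text{theta series})\pmod 4$. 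In part (ii), the remark implies $\co_1'(n)$ is always even, so one studies $\co_1'/2\pmod 2$ via the same mod-$4$ expansion; the presence of $\sqrt{24X+1}$ in the denominator of the stated bound strongly suggests that the generalized pentagonal numbers $m(3m\pm 1)/2$, arising from Euler's pentagonal theorem applied to $(q;q)_\infty$, parametrize the exceptional set, while the $\log^\alpha X$ factor is produced by a Serre-type density bound on the nonzero coefficients of a modular form mod $2$.

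The main obstacle will be the mod-$4$ identification. The Freshman's dream is inherently a mod-$2$ statement, and passing to mod $4$ requires an explicit handle on the correction $2(q;q)_\infty^2-(q^2;q^2)_\infty$ (and its multiplicative inverse) as a theta-like series. Once this is in place, the counting reduces to the elementary estimate $\#\{k\in\Z:f(k)\le X\}\asymp\sqrt X$ for quadratic $f$, together with (for part (ii)) the Serre-style density bound producing the $\log^\alpha X$ correction.
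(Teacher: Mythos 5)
Your framing correctly identifies several of the right ingredients (Ahlgren's method, pentagonal numbers behind the $\sqrt{24X+1}$, a Serre-type density bound behind the $\log^{\alpha}X$), but the central mechanism you propose --- reducing each generating function modulo $2$ or $4$ to a theta series --- cannot work, because it would prove statements that are false. If $\CO_2(q)$ were congruent mod $2$ to a theta series $\sum_k q^{g(k)}$, then $\co_2(n)$ would be even for all but $O(\sqrt X)$ values of $n<X$; but $\CO_2(q)$ is not an eta quotient --- it is a false-theta-type numerator divided by $(q)_{\infty}$, and $1/(q)_{\infty}\equiv\sum p(n)q^n\pmod 2$ is not sparse. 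Indeed the displayed coefficients $1,2,3,4,7,10,\dotsc$ are odd for a positive proportion of small $n$ (a short computation from Lemma~\ref{expansions} gives odd values at $n=0,2,4,6,7$), the authors conjecture equidistribution of $\co_2$ modulo $2$, and Theorem~\ref{loglog} explicitly includes $a=2$ for $\co_2$ precisely because the parity is nontrivial. The same objection defeats your mod-$4$ program for part (i): an identity $\CE(q)\equiv(\text{theta})+2(\text{theta})\pmod 4$ would force $\ce(n)\equiv0\pmod 4$ for all but $O(\sqrt X)$ values of $n$, whereas the conjectured behaviour is that $\ce(n)\equiv2\pmod4$ for a set of density $1/2$. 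Only $\co_1$ genuinely collapses to a theta series mod $2$ (that is the content of the Remark), which is exactly why the theorem must be stated for $\co_1'$ rather than $\co_1$.

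The paper's actual argument keeps the denominator in place as a recurrence instead of trying to invert it. Multiplying $\CE(q)$ by the denominator (which is $(q)_{\infty}$ by Euler's pentagonal number theorem) gives
\[
a(n)\equiv\sum_{k}\ce\Bigl(n-\tfrac{3k^2\pm k}{2}\Bigr)\pmod 4,
\]
where $\sum a(n)q^n$ is the sparse numerator and the signs disappear mod $4$ because every $\ce(m)$ with $m\ge1$ is even. For the roughly $\tfrac23X$ values of $n$ at which $a(n)=0$ and the number of summands is odd, a sum of an odd number of terms each $\equiv2\pmod4$ is $\equiv2\pmod 4$, so at least one summand must be $\equiv0\pmod4$. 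Each such witness can be produced by at most $O(\sqrt X)$ different $n$ (once per generalized pentagonal number below $X$), which is precisely why the bound degrades from $X$ to $\sqrt X$ --- your proposal, by contrast, would yield $X-O(\sqrt X)$, a red flag that the approach is proving too much. Part (iii) is the same argument mod $2$ after discarding the $O(\sqrt X)$ values of $n$ with $a(n)\neq0$, and part (ii) replaces the ``$a(n)=0$ for $\sim\tfrac23X$ values'' step by the Serre-type count of integers not represented by the triangular-plus-pentagonal binary quadratic form, which is where your $\log^{\alpha}X$ correctly enters. You should redirect the proof toward this parity-of-the-number-of-summands argument rather than seeking a theta identity.
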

%\begin{remark}
%Since the function $\co_1(n)$ is odd exactly when $n$ is a triangular number (see the remark following Lemma~\ref{expansions}), we define the function $\co_1'(n)$ by subtracting $1$ from $\co_1(n)$ if $n$ is triangular and keeping it the same otherwise.
%\end{remark}

If we consider the more general case of a modulus $a$, we get a result similar to that of Mirsky \cite{mirsky}.

\begin{theorem}\label{loglog}
For every $a>2$ and at least two distinct values of $r$ among $\{0,1,\dotsc,a-1\}$, we have
\begin{align}
E_{\ce}(r,a;X) &> \frac{\log_2\log_3 X}{a},\\
E_{\co_1}(r,a;X) &> \frac{\log_2\log_3 X}{a},\text{ and}\\
E_{\co_2}(r,a;X) &> \frac{\log_2\log_3 X}{a}
\end{align}
for $X$ sufficiently large. In the cases $\co_1$ and $\co_2$ this also applies when $a=2$.
\end{theorem}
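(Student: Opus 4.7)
\emph{Proof plan.} By pigeonhole the theorem reduces to the single claim that for each $f \in \{\ce, \co_1, \co_2\}$, each admissible modulus $a$, and each $r \in \{0, 1, \dotsc, a-1\}$, the exception set
$$S_{f,r}(X) := \{n < X : f(n) \not\equiv r \pmod{a}\}$$
has size at least $\log_2 \log_3 X$ for $X$ large. Indeed, if the second-most-popular residue class in $\{f(n) \bmod a : n < X\}$ had multiplicity $\leq \log_2 \log_3 X / a$, then summing the remaining $a-1$ non-dominant multiplicities would bound the exception set of the dominant class by $(a-1) \log_2 \log_3 X / a < \log_2 \log_3 X$. (The $\ce$ case excludes $a = 2$ because $\ce(n)$ is always even, as noted in the abstract, so only one residue is hit mod $2$.)

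\emph{Strategy.} Argue by contradiction. Assume $f(n) \equiv r \pmod{a}$ on all of $[0, X)$ except an exceptional set $S$ of size $< \log_2 \log_3 X$. The product expansions for $F_f(q) = \sum_n f(n) q^n$ furnished by Lemma~\ref{expansions} yield an identity $\Psi_f(q) F_f(q) = \Theta_f(q)$, where $\Psi_f$ is an Eulerian product built from factors $(q^k; q^k)_\infty^{\pm 1}$ and $\Theta_f$ is a lacunary theta-like series supported on a quadratic progression (pentagonal for $\ce$, triangular for $\co_1, \co_2$), with only $O(\sqrt{X})$ nonzero coefficients in $[0, X]$ and prescribed signs modulo $a$. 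Writing
$$F_f(q) \equiv \frac{r(1 - q^X)}{1 - q} + E(q) \pmod{a,\ q^X}$$
with $E$ a polynomial supported on $S$, and multiplying by $\Psi_f(q)$, we obtain
$$\Theta_f(q) \equiv \frac{r\,\Psi_f(q)(1 - q^X)}{1 - q} + \Psi_f(q) E(q) \pmod{a,\ q^X}.$$
The left side and the first term on the right are both lacunary series with explicit supports and signs mod $a$; their difference must equal $\Psi_f(q) E(q)$, whose coefficient support in $[0,X]$ has size at most $|S| \cdot O(\sqrt{X})$. A direct coefficient comparison between two lacunary series already gives an initial bound $|S| \gg 1$.

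\emph{Amplification.} To reach the iterated-log bound we bootstrap via the Frobenius-type identity $\Psi_f(q)^a \equiv \Psi_f(q^a) \pmod{a}$ (when $a$ is prime; otherwise reduce to a prime divisor of $a$). Re-multiplying by $\Psi_f(q^a)$ and comparing at the reduced degree scale $X^{1/a}$ produces a fresh system of constraints on $E(q)$, each of which forces at least one new position into $S$. After $k$ successive rescalings the effective degree collapses from $X$ to $X^{1/a^k}$, which remains nontrivial provided $k \ll \log_a \log X$; hence $|S| \gtrsim k \gg \log_2 \log_3 X$, contradicting the hypothesis.

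\emph{Main obstacle.} The principal delicacy is the iterative amplification: one must verify that each rescaling round contributes a genuinely new element to $S$ rather than merely reshuffling previously forced positions, and that the explicit lacunary sign patterns of $\Theta_f$ (inherited from Euler's pentagonal number theorem and Jacobi's triangular series identity) preclude wholesale cancellations in $\Psi_f(q^{a^j}) E(q)$. Carefully tracking this degree-loss versus multiplicity-gain across the $\sim \log \log X$ rounds of iteration is the technical heart of the proof.
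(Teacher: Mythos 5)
Your pigeonhole reduction to lower-bounding the exception sets $S_{f,r}(X)$ is sound and is exactly how the paper finishes, but the core claim that $|S_{f,r}(X)|\ge\log_2\log_3X$ has a genuine gap at what you yourself call the ``technical heart.'' The direct support-counting comparison of $\Theta_f(q)$ against $\Psi_f(q)E(q)$ yields only $|S|\gg1$ (as you concede), and the proposed bootstrap via $\Psi_f(q)^a\equiv\Psi_f(q^a)\pmod{a}$ is never shown to force a \emph{new} element of $S$ at each round --- you name this as the main obstacle rather than overcoming it, so the iterated-logarithm bound is not established. Two further problems: the Frobenius identity requires $a$ prime, and the fallback of reducing to a prime divisor of $a$ fails for $\ce$ when $a$ is a power of $2$ (e.g.\ $a=4$), since $\ce(n)$ is always even and the mod-$2$ statement for $\ce$ is false, yet the theorem must cover all $a>2$. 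Also, for $r\neq0$ the term $r\Psi_f(q)(1-q^X)/(1-q)$ is not lacunary --- its coefficients are partial sums of the pentagonal signs and are nonzero on a positive proportion of $n$ --- so the advertised comparison of ``two lacunary series'' does not apply as stated.

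The paper's route is elementary and avoids all of this. By Lemma~\ref{expansions}, the coefficient of $q^N$ with $N=\ell(3\ell+1)/2+2$ in $(q)_\infty\CE(q)$ vanishes, giving an alternating convolution identity among the values $\ce(N-g_k)$ over generalized pentagonal numbers $g_k$; for $\ell=2m-1$ odd the isolated term $(-1)^\ell\ce(2)=-2$ moves to the right-hand side, and if every $\ce(n)$ with $2m+1\le n\le N$ were $\equiv r\pmod{a}$ the alternating signs would telescope to $0\equiv2\pmod{a}$, impossible for $a>2$ (composite or not). Choosing $m_j=3^{2^{j-1}-1}$ makes the windows disjoint and yields $\log_2\log_3X-C$ exceptional values; the $\co_1,\co_2$ cases are identical using the coefficient of $q^{(2m-1)(3m-1)}$ and the term $\co_i(0)=1$, which also covers $a=2$ there. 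To salvage your approach you would need to supply the missing induction showing each rescaling round genuinely enlarges $S$, and to handle composite moduli (especially powers of $2$ for $\ce$) directly rather than by passing to a prime divisor.
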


Although the above bound is the best we can prove, we expect the true distributions to be much more balanced.

\begin{conjecture}
For any modulus $a\ge2$, we have
\begin{align*}
E_{\co_1}(r,a;X)&\sim \frac{X}{a},\ a\text{ odd}\\
E_{\co_1}(r,a;X) &\sim \frac{2X}{a},\ a\text{ even},\ r\text{ even}\\
E_{\co_1}(r,a;X)&\sim \frac{2\sqrt{2X}}{a},\ a\text{ even},\ r\text{ odd}.
\end{align*}
For $\ce$, we expect similar asymptotics to hold, but without the last case. For $\co_2$, we expect uniformity across residue classes for any $a$.
\end{conjecture}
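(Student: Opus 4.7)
My plan would start from the generating-function identities of Andrews for $\ce$, $\co_1$, and $\co_2$, which express these functions in terms of $q$-Pochhammer products and unary theta series. The paper already observes that $\co_1(n)$ is odd exactly on the triangular integers, and this is precisely what forces the final case of the conjecture: for even $a$ and odd $r$, only the $\sim\sqrt{2X}$ triangular $n<X$ can contribute to $E_{\co_1}(r,a;X)$, and the predicted constant $2\sqrt{2X}/a$ is just the guess of uniform distribution of $\co_1(n)\pmod a$ across the $a/2$ odd residue classes on that sparse set. My first concrete target would be to prove that equidistribution on triangular $n$, since it is the cleanest sub-statement and already yields the last line of the conjecture.

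For the bulk densities $X/a$ and $2X/a$ I would follow the general framework developed for $p(n)$ by Ahlgren, Ono, Treneer, and others. Writing
\[
S_f(X;r,a)=\sum_{n<X}e^{2\pi i(f(n)-r)/a},
\]
one can interpret $S_f$ as a truncated sum of Fourier coefficients of the twist of the generating series of $f$ by additive characters modulo $a$. Since the relevant generating series are essentially $\eta$-quotients multiplied by unary theta series, they are naturally mixed mock modular forms, and the plan would be to complete them to harmonic Maass forms via Zwegers' construction and then deploy Deligne- or Rankin--Selberg-type bounds on Fourier coefficients to extract power saving in $S_f(X;r,a)$ whenever $r\not\equiv 0$ against the modulus. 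Power saving then yields the conjectured asymptotic by Fourier inversion over the additive characters. I would attack $\co_2$ first, since no parity obstruction is expected there and the conjecture is uniform across residues.

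The principal obstacle is essentially that of the Parkin--Shanks conjecture for $p(n)$: even establishing that $\ce(n)$ attains both admissible residues modulo $4$ with \emph{linear} density would require a genuinely new analytic input, which is why Theorem~\ref{main} produces only $\sqrt{X}$ and Theorem~\ref{loglog} only $\log_2\log_3 X$ at small moduli. Concretely, controlling the non-holomorphic completion terms in a harmonic Maass-form approach uniformly in $r$ and $a$ is the specific technical difficulty, and the existing Serre-type density results for mod-$\ell$ Galois representations of half-integral weight forms give at most positive density, not the exact asymptotic. Accordingly, I expect real progress first on $\co_2$ and on odd moduli $a$, where the conjecture takes its cleanest form; the cases of $\ce$ and $\co_1$ at small even $a$ seem likely to resist attack until something like the full parity conjecture for $p(n)$ is resolved.
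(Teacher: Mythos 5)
The statement you are addressing is the paper's \emph{Conjecture}, which the authors explicitly do not prove: they state that the $\log_2\log_3 X$ bound of Theorem~\ref{loglog} ``is the best we can prove'' and offer the asymptotics only as an expectation. So there is no proof in the paper to compare against, and your submission is likewise not a proof but a research program. Your reading of the heuristic behind the third case is correct and matches the paper's remark: $\co_1(n)$ is odd exactly when $n$ is triangular, there are about $\sqrt{2X}$ triangular numbers below $X$, and uniform distribution over the $a/2$ odd residue classes would give $2\sqrt{2X}/a$. But identifying the heuristic is not the same as establishing even that sub-case; you would still need equidistribution of $\co_1$ on triangular arguments, which is itself wide open.

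The concrete gap in your plan is the central analytic step. You propose to interpret $S_f(X;r,a)=\sum_{n<X}e^{2\pi i(f(n)-r)/a}$ as ``a truncated sum of Fourier coefficients of the twist of the generating series of $f$ by additive characters modulo $a$.'' Twisting $\sum f(n)q^n$ by an additive character modulo $a$ produces $\sum f(n)e^{2\pi inb/a}q^n$, i.e.\ a character applied to the \emph{argument} $n$; the sum you need applies the character to the \emph{value} $f(n)$, and there is no known modular or mock modular object whose coefficients are $e^{2\pi if(n)/a}$. This is precisely the obstruction that has kept the Parkin--Shanks conjecture for $p(n)$ open for over fifty years, and it does not disappear by completing the generating functions to harmonic Maass forms: Deligne- or Rankin--Selberg-type bounds control the size of $f(n)$, not the distribution of its residues. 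Your closing paragraph correctly senses that the problem is of Parkin--Shanks difficulty, but the plan as written rests on a conflation that would have to be repaired before any of the proposed machinery could engage with the conjecture.
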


\section{Proofs}
\subsection{Generating Functions}
When faced with combinatorial objects such as concave compositions, it is natural to consider the generating functions for each object. Andrews found $q$-series expansions for each type of concave composition in Theorems 1-3 of~\cite{andrews}, which we restate here.

\begin{lemma}\label{expansions}
Define the generating functions $\CE(q)=\sum\limits_{n=0}^{\infty}\ce(n)q^n$, $\CO_1(q)=\sum\limits_{n=0}^{\infty}\co_1(n)q^n$, and $\CO_2(q)=\sum\limits_{n=0}^{\infty}\co_2(n)q^n.$ Then we have
$$\CE(q)=\frac{1+\sum\limits_{n=1}^{\infty}\left(-q^{\frac{3n^2-n}{2}}+q^{\frac{3n^2+n}{2}}\right)}{1+\sum\limits_{n=1}^{\infty}(-1)^n\left(q^{\frac{3n^2-n}{2}}+q^{\frac{3n^2+n}{2}}\right)} =1+2q^2+2q^3+4q^4+4q^5+\cdots,$$
$$\CO_1(q)=\frac{1+\sum\limits_{n=1}^{\infty}\left(-q^{6n^2-2n}+q^{6n^2+2n}\right)}{1+\sum\limits_{n=1}^{\infty}(-1)^n\left(q^{\frac{3n^2-n}{2}}+q^{\frac{3n^2+n}{2}}\right)} =1+q+2q^2+3q^3+4q^4+6q^5+\cdots,$$
$$\CO_2(q)=\frac{1+\sum\limits_{n=1}^{\infty}\left(q^{6n^2-8n+3}-q^{6n^2-4n+1}\right)}{1+\sum\limits_{n=1}^{\infty}(-1)^n\left(q^{\frac{3n^2-n}{2}}+q^{\frac{3n^2+n}{2}}\right)} =1+2q+3q^2+4q^3+7q^4+10q^5+\dotsb.$$
\end{lemma}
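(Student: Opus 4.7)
Since this lemma simply restates Theorems~1--3 of \cite{andrews}, the natural plan is to follow Andrews' derivation. For each of the three types, I would begin from the combinatorial definition and write the generating function as a double sum indexed by the central part $c\ge 0$ and the common length $k$ of the two strict flanks. Since a strictly decreasing sequence of $k$ parts, each exceeding $c$, is a strict partition into $k$ parts of size $\ge c+1$, with generating function $q^{kc+k(k+1)/2}/(q;q)_k$, this produces a ``bare'' double series such as
\[
\CE(q) \;=\; \sum_{c\ge 0}\sum_{k\ge 0} \frac{q^{2c(k+1)+k(k+1)}}{(q;q)_k^{2}},
\]
with analogous expressions for $\CO_1$ and $\CO_2$ in which the central part contributes a single $q^c$ rather than $q^{2c}$, and, for type $2$, the relaxed inequality $a_{m+1}\le b_m$ contributes an additional term accounting for the possibility $a_{m+1}<b_m$.

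The next step is to convert each of these double sums into the displayed rational closed form. The common denominator is recognized as the Euler product $(q;q)_\infty=\prod_{n\ge 1}(1-q^n)$ via Euler's pentagonal number theorem
\[
(q;q)_\infty \;=\; 1 + \sum_{n\ge 1}(-1)^n\bigl(q^{(3n^2-n)/2}+q^{(3n^2+n)/2}\bigr).
\]
I would perform the $c$-summation first to obtain a geometric factor depending on $k$, multiply and divide by $(q;q)_\infty$ to force the denominator into the desired form, and then manipulate the remaining $k$-series using Heine's $_2\phi_1$ transformation together with a suitable Jacobi triple product specialization to collapse the numerator into the compact theta series displayed.

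The main obstacle is identifying the correct $q$-series identity in each of the three cases so that the numerator collapses into the finite-looking theta sum with exponents $(3n^2\pm n)/2$ for $\ce$ and the quadratic exponents $6n^2\pm 2n$ and $6n^2-8n+3$, $6n^2-4n+1$ for the two odd-length variants. Since Andrews has already executed this reduction in detail in \cite{andrews}, we may simply invoke his results. As a consistency check, the leading coefficients $1+2q^2+2q^3+4q^4+4q^5+\cdots$ and the analogues for $\CO_1$ and $\CO_2$ can be verified by directly enumerating concave compositions of small integers, and the parity remark about $\co_1(n)$ being odd exactly on triangular numbers is then read off from the numerator of $\CO_1(q)$ modulo $2$.
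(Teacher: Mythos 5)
Your proposal matches the paper's treatment: the paper offers no proof of this lemma at all, simply citing Theorems 1--3 of Andrews' \emph{Concave Compositions}, and your plan likewise reduces to invoking Andrews after a correct setup of the double sum $\sum_{c,k}q^{2c(k+1)+k(k+1)}/(q;q)_k^2$ and the pentagonal-number-theorem identification of the denominator. One small slip worth fixing: the extra contribution for type $2$ comes from the equality case $a_{m+1}=b_m$ (type $2$ minus type $1$), not from ``the possibility $a_{m+1}<b_m$,'' which is already type $1$.
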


\begin{remark}
It is easy to show that $\CO_1(q)\equiv\sum_{n=0}^{\infty}q^{n(n+1)/2}\pmod{2}$, as follows. We have by Lemma~\ref{expansions} that
\[
\CO_1(q)=\frac{1-\sum_{n=1}^{\infty}\left(q^{4n(3n-1)/2}-q^{4n(3n+1)/2}\right)}{1+\sum_{n=1}^{\infty}(-1)^n\left(q^{n(3n-1)/2}+q^{n(3n+1)/2}\right)}
\equiv(q)_{\infty}^3\spmod{2}
\]
where $(q)_{\infty}=\prod\limits_{n=1}^{\infty}(1-q^n).$ By Lemma 12 of~\cite{andrews}, we have that
\[
\sum_{n=0}^{\infty}(-1)^nq^{n(n+1)/2}=(q)_{\infty}\sum_{n=0}^{\infty}\frac{q^{n(n+1)}}{(q)_n^2}\equiv
(q)_{\infty}\left(\sum_{n=0}^{\infty}\frac{q^{n(n+1)/2}}{(q)_n}\right)^2\equiv(q)_{\infty}^3\spmod{2}
\]
since the partitions into distinct parts are conjugate to the partitions into $1,2,\dotsc,n$ missing nothing. The desired congruence follows.

Motivated by this congruence, we define $\CO_1'(q)=\sum\limits_{n=0}^{\infty}\co_1'(n)q^n=\CO_1(q)-\sum_{n=0}^{\infty}q^{n(n+1)/2}$, so that all of the coefficients of $\CO_1'(q)$ are even. This will ease our study of the series modulo $4$. With these, we can now proceed to the proofs of our main theorems.
\end{remark}

\subsection{Proofs of Theorems~\ref{main} and \ref{loglog}}
Combining the expansions given by Andrews \cite{andrews} with a generalization of Ahlgren's argument in \cite{ahlgren}, we prove our main theorem.
\begin{proof}[Proof of Theorem~\ref{main}]

(i) First, we show that the coefficients $\ce(n)$ are all even for $n>1$.  There is a natural pairing between non-palindromic concave compositions of even length given by mirroring the sequence. There is furthermore a natural pairing between palindromic compositions given by inserting or removing a pair of zeroes at the center of the sequence. Alternatively, Lemma~\ref{expansions} makes it clear that $\CE(q)\equiv1\pmod2$.

Thus we have that $\ce(n)$ is congruent to either $0$ or $2 \bmod 4$. From Lemma~\ref{expansions}, we have the following:
$$\left(1+\sum\limits_{n=1}^{\infty}(-1)^n\left(q^{\frac{3n^2-n}{2}}+q^{\frac{3n^2+n}{2}}\right)\right)\left(\sum_{n=0}^{\infty}\ce(n)q^n\right)=1+\sum\limits_{n=1}^{\infty}\left(-q^{\frac{3n^2-n}{2}}+q^{\frac{3n^2+n}{2}}\right).$$ 

We will denote the right hand side by $\sum\limits_{n=0}^{\infty}a(n)q^n$. Thus, since $-2\equiv 2\pmod 4$, we have that 
\begin{equation}\label{decomposition}a(n)\equiv\ce(n)+\ce(n-1)+\ce(n-2)+\ce(n-5)+\cdots+\ce\left(n-\frac{3k^2\pm k}{2}\right)+\cdots\spmod 4.\end{equation}

If $a(n)$ is divisible by $4$ and there are an odd number of summands, we can conclude that one of the summands is also divisible by $4$. It is easy to see that there will be an odd number of summands exactly when $\frac{3k^2+k}{2}<n<\frac{3(k+1)^2-(k+1)}{2}.$ As $X$ tends to infinity, it is easy to show that the number of such $n<X$ tends to $\frac{2}{3}X$ from below very quickly, since $\frac{3(k+1)^2-(k+1)}{2}$ is almost exactly two-thirds of the way from $\frac{3k^2+k}{2}$ to $\frac{3(k+1)^2+(k+1)}{2}$. 

Thus for approximately $\frac{2}{3}X$ values of $n$, one of the terms $\ce(i)$ must be congruent to $0$ modulo $4$. These terms may be overcounted by the number of decompositions of the form (\ref{decomposition}) in which they appear. This is bounded above by twice the number of pentagonal numbers less than $X$, which is $\frac{\sqrt{24X+1}+1}{3}$. Thus we can conclude that we have, for some small constant $\eps$,
$$E_{\ce}(0,4;X)>\frac{\left(2-\eps\right)X}{\sqrt{24X+1}+1},$$ as desired.

(ii) We first write out the expansion of $\CO_1'$ as in (i): 
\begin{align*}
&\left(1+\sum\limits_{n=1}^{\infty}(-1)^n\left(q^{\frac{3n^2-n}{2}}+q^{\frac{3n^2+n}{2}}\right)\right)\left(\sum_{n=0}^{\infty}\co_1'(n)q^n\right)\\
= &1+\sum\limits_{n=1}^{\infty}\left(-q^{6n^2-2n}+q^{6n^2+2n}\right)-\sum_{j,k=1}^{\infty}\left(q^{\frac{j^2-j+3k^2-k}{2}}+q^{\frac{j^2-j+3k^2+k}{2}}\right).
\end{align*}
Again writing the right hand side as $\sum\limits_{n=0}^{\infty}a(n)q^n$, we notice that $a(n)=0$ whenever $n$ is not expressible as $6k^2\pm 2k$ or as the sum of a triangular and a pentagonal number. To obtain a bound on how many such terms there are, we first notice that there are at most $\frac{\sqrt{6X+1}-1}{3}$ values of $n<X$ expressible as $6k^2\pm2k$. To find how many numbers less than $X$ are expressible as the sum of a triangular and a pentagonal number, we use the result that if $Q(x,y)$ is a positive definite binary quadratic form,
$$\#\{n<X: n=Q(x,y)\text{ for some }x,y\in\Z\}\asymp\frac{X}{\log^{\alpha}X}$$
 for some $0<\alpha<1$. For a presentation of a similar result, see Section 2 in \cite{Serre}.

Thus we have that there are at least $X\left(\frac{\log^{\alpha}X-c}{\log^{\alpha}X}\right)$ numbers $n<X$ such that $a(n)=0$. Then by an argument analogous to that in (i), we can conclude that 
$$E_{\co_1'}(0,4;X)>\frac{X\log^{\alpha}X-cX}{(\sqrt{24X+1}+1)\log^{\alpha}X}.$$

(iii) This proof is analogous to the proof of (i), the only difference being that we need to exclude the values of $n$ for which $a(n)$ is nonzero. Thus the bound we get in this case is, for some small constant $\eps$, 
$$E_{\co_2}(0,2;X)>\frac{\left(2-\eps\right)X-4-\sqrt{6X-2}}{\sqrt{24X+1}+1},$$ implying the desired result.
\end{proof}

We adapt the strategy of Mirsky in~\cite{mirsky} to prove Theorem~\ref{loglog}.
\begin{proof}[Proof of Theorem~\ref{loglog}]
 Let $E_{\ce}^*(r,a;X)=\#\{n\le X: \ce(n)\not\equiv r\pmod{a}\}$, and define
$E_{\co_1}^*(r,a;X)$ and $E_{\co_2}^*(r,a;X)$ similarly. Fix $r$; then, in the case of $\CE$, we claim that $E_{\ce}^*(r,a;X)>\log_2\log_3 X-C$ for some constant $C$. 

Recall that $(q)_{\infty}=\prod\limits_{n=1}^{\infty}(1-q^n).$ Then we have by Lemma~\ref{expansions}, considering the coefficient of $q^{\ell(3\ell+1)/2+2}$ in $(q)_{\infty}\CE(q)$, that
\begin{align*}
\ce(\ell(3\ell+1)/2+2)+\sum_{k=1}^\ell(-1)^k\ce\left(\frac{\ell(3\ell+1)}2+2-\frac{k(3k-1)}2\right)\\
+\sum_{k=1}^{\ell-1}(-1)^k\ce\left(\frac{\ell(3\ell+1)}2+2-\frac{k(3k+1)}2\right)+(-1)^{\ell}\ce(2)=0
\end{align*}
for all $\ell\ge1$. For $\ell=2m-1$ this becomes
\begin{align}
\label{cong}
\ce((2m-1)(3m-1)+2)+\sum_{k=1}^{2m-1}(-1)^k\ce((2m-1)(3m-1)+2-k(3k-1)/2)\\
+\sum_{k=1}^{2m-2}(-1)^k\ce((2m-1)(3m-1)+2-k(3k+1)/2)=2.
\end{align}
We claim that some element of $\ce(2m+1),\ce(2m+2),\dotsc,\ce((2m-1)(3m-1)+2)$ is not congruent to $r\pmod{a}$. Suppose otherwise; then the above equation gives
\[
0\equiv r+\sum_{k=1}^{2m-1}(-1)^kr+\sum_{k=1}^{2m-2}(-1)^kr\equiv2\spmod{a},
\]
which is a contradiction. We would like to construct a sequence $m_j$ so that the terms appearing in Equation~\ref{cong} for $m_j$ and $m_i$ do not overlap for $i\neq j$, hence giving a value of $\ce$ not congruent to $r\pmod{a}$ for each $j$. Since the lowest-indexed term in Equation~\ref{cong} for $m_j$ is $2m_j+1$ and the highest is $(2m_j-1)(3m_j-1)+2$, it suffices to set $2m_j+1>(2m_{j-1}-1)(3m_{j-1}-1)+2$, or $2m_j>6m_{j-1}^2-5m_{j-1}+2$. Hence we can choose $m_1=1$, $m_j=3m_{j-1}^2$, so that $m_j=3^{2^{j-1}-1}$. This gives $E_{\ce}^*(r,a;3^{2^{j-1}-1})\ge j$ for all $j$. Setting $j=\lfloor\log_2\log_3X\rfloor$ gives $E_{\ce}^*(r,a;X)\ge\log_2\log_3X-C$, as desired.

Since $E_{\ce}^*(0,a;X)=\sum\limits_{r'\neq 0}E_{\ce}(r',a;X)$, we can write 
\[
\sum_{r'\neq 0}E_{\ce}(r',a;X)>\log_2\log_3X-C,
\] 
from which there is some $r_1$ so that 
\[
E_{\ce}(r_1,a;X)>\frac1{a-1}\log_2\log_3X-C>\frac1a\log_2\log_3X
\]
for $X$ sufficiently large. Then we also have $\sum_{r'\neq r_1}E_{\ce}(r',a;X)= E_{\ce}^*(r_1,a;X)>\log_2\log_3X-C$, giving some $r_2\neq r_1$ so that $E_{\ce}(r_2,a;X)>\frac1a\log_2\log_3X$. This gives us two residues, $r_1$ and $r_2$, with the desired bound in the $\CE$ case.

For $\CO_1$ and $\CO_2$, the same argument applies almost verbatim, with the remark that we use the coefficient of $q^{(2m-1)(3m-1)}$ in $(q)_{\infty}\CO_1(q),(q)_{\infty}\CO_2(q)$ respectively, rather than $q^{(2m-1)(3m-1)+2}$. In both cases we can guarantee that this coefficient must equal $0$. In the case $\CO_1$ the difference between $(2m-1)(3m-1)$ and the nearest exponent of a nonzero coefficient is $3m-1$; in the case $\CO_2$ it is $m$. Noting that $\co_1(0)=\co_2(0)=1\neq0$, the rest of the argument follows.
%In the case that $\ell$ is even, setting $\ell=2m$ yields
%\[
%\ce(m(6m-1))+\sum_{k=1}^{2m-1}(-1)^k\ce(m(6m-1)-k(3k-1)/2)+\sum_{k=1}^{2m-1}(-1)^k\ce(m(6m-1)-k(3k+1)/2)=-2.
%\]
%Suppose there is some $r$ for which $\ce(n)\equiv r\spmod{a}$ for all 
\end{proof}


\begin{thebibliography}{1}
\bibitem{ahlgren} S. Ahlgren, \textit{Distribution of the Parity of the Partition Function in Arithmetic Progressions}, Indag. Math. N. S. 10 (2), (1999), 172-181.
\bibitem{andrews} G. Andrews, \textit{Concave Compositions}, Electronic J. of Comb. 18 (2011), P6.
\bibitem{Berndt} B. C. Berndt, A. J. Yee, A. Zaharescu, \textit{On the Parity of Partition Functions}, Int. J. Math.  14 (2003), 437-459.
\bibitem{mirsky} L. Mirsky, \textit{The Distribution of Values of the Partition Function in Residue Classes}, J. of Math. Analysis and Applications 93 (1983), 593-598.
\bibitem{parkin} T. R. Parkin and D. Shanks, \textit{On the distribution of parity in the partition function}, Math.
Comp. 21 (1967), 466-480.
\bibitem{Serre} J.-P. Serre, \textit{Divisibilitie de certaines fonctions arithmetiques,} L'Ens. Math. 22 (1976) 227-260.
\end{thebibliography}
\end{document}